\newcommand{\real}{\mathbb{R}}
\newcommand{\prob}{\mathbb{P}}
\newcommand{\mean}{\mathbb{E}}
\newcommand{\ind}{{\bf 1}}
\theoremstyle{plain}
\newtheorem{theorem}{Theorem}
\newtheorem{lemma}{Lemma}
\theoremstyle{definition}
\begin{document}

\title{Bounds on the artificial phase transition for
perfect simulation of repulsive point processes}

\maketitle

{\bfseries \sffamily Mark L. Huber} \par
{\slshape Department of Mathematics and Computer Science, 
  Claremont McKenna College} \par
{\slshape mhuber@cmc.edu}
\vskip 1em
{\bfseries \sffamily Elise McCall} \par
{\slshape 
  Massachusetts Institute of Technology} \par
{\slshape elise@mit.edu}
\vskip 1em
{\bfseries \sffamily Daniel Rozenfeld} \par
{\slshape 
  Harvey Mudd College} \par
{\slshape Daniel\_J\_Rozenfeld@hmc.edu}
\vskip 1em
{\bfseries \sffamily Jason Xu} \par
{\slshape University of Arizona} \par
{\slshape qxu@email.arizona.edu}

{\abstract Repulsive point processes arise in models where competition
forces entities to be more spread apart than if placed independently.
Simulation of these types of processes can be accomplished using
dominated coupling from the past with a running time that varies
as the intensity of the number of points.  These algorithms 
usually exhibit what is called an artificial phase transition, where below
a critical intensity the algorithm runs in finite expected time, but above
the critical intensity the expected number of steps is infinite. 
Here the artificial phase transition is 
examined.  In particular, an earlier lower bound on this artificial 
phase transition is improved by including a new type of term in the 
analysis.  In addition, the results of computer 
experiments to locate the transition are presented.}

\section{Introduction}

A spatial point process is a collection of points in a set $S$.  In
most applications, $S$ is a continuous space and all of the points
are distinct.  For instance, the locations of trees in a 
forest~\cite{mollerw2007} and the locations of cities in a 
country~\cite{glasst1971} can be modeled using spatial point
processes.

One simple spatial point process is the  Poisson point process.  Suppose that
$S$ is a separable set in ${\mathbb R}^d$ with bounded Lebesgue measure.
The basic Poisson point process is the outcome of the following 
algorithm.  First choose a random number of points $N$ according to a 
Poisson distribution with parameter $\lambda \mu(S)$ (so
$\prob(N = i) = \exp(-\lambda \mu) (\lambda \mu)^i / i!$ for nonnegative
integers $i$.)  Here
$\mu$ is Lebesgue measure and $\lambda \in \real$ is a parameter of the 
model. 
Next, choose points $X_1,\ldots,X_n$ independently and uniformly from 
the set $S$.  The resulting set $\{X_1,\ldots,X_N\}$ is a Poisson
point process.

Since the points are drawn independently, this model fails to
capture situations where the locations of points are
not independent.  In both the forest and cities examples mentioned
earlier, the points tend to be farther apart than in the independent
situation since the entities involved
 are competing for space and resources.  The points
appear to act as particles with the same charge, and so they
exhibit repulsion.

There are several ways to account for this repulsion.  One type of
model is called a pairwise interaction point process.  An example of
such a model is the hard core gas model,
where each point is surrounded by a {\em core} of radius
$R / 2$ which is hard in the sense that two cores are not allowed to overlap.
In other words, all of the points of the process must be at least distance
$R$ away from each other, where $R$ is a parameter of the model.

In frequentist approaches, this model can be used to construct maximum
likelihood estimators for $R$ and $\lambda$.  In Bayesian approaches,
this model (together with a prior on $\lambda$ and $R$) can be used to
build a posterior for the parameters.  In either instance, evaluation of
needed quantities is usually accomplished through simulation:  drawing
samples from the model.

Kendall and M{\o}ller~\cite{kendallm2000} 
showed how to draw samples from the hard core gas model by using
dominated coupling from the past (dcftp.)  A previous analysis 
had shown that when using the standard Euclidean distance, 
this method was provably fast when 
$\lambda < 1 / [\pi R^2]$~\cite{huber2009f}.  
In this work we build upon this analysis, providing a wider set of 
conditions on $\lambda$ and $R$ for the dcftp method to run quickly.  The
original argument used a term depending on the number of points in the 
configuration, while the new method uses the number of points as well as the
area spanned by these points.  This extra area term is what leads to the 
stronger proof.  For ease of exposition we use the Euclidean metric to measure
the distance between points and only operate in $\real^2$ throughout
this work; we simply note that the same argument 
can easily be applied to any metric and to problems in higher dimensions.

The remainder of the work is organized as follows.  Section~\ref{SEC:setup}
describes how to build the hard core gas model in detail.  The following
section illustrates how to construct a continuous time Markov chain 
whose
stationary distribution matches the model.  Section~\ref{SEC:dcftp} then
explains how dominated coupling from the past can use the Markov chain
to obtain draws exactly from the target distribution.  Next, 
Section~\ref{SEC:bounding} gives our new result:  
improved sufficient conditions on the parameters
of the model for dominated coupling from the past to operate quickly.
Section~\ref{SEC:experimental} gives computer results to complement the 
theoretical
results of the previous section, and we close with our conclusions.

\section{Setting up the hard core gas model}
\label{SEC:setup}

There are several methods for describing the hard core gas model; 
in this section it is constructed 
as the outcome of the following algorithm.  
Begin with a parameter $\lambda$ and
a space $S$ with $\mu(S) < \infty$, where $\mu$ is Lebesgue measure.
As in the previous section, start by choosing the number of points $N$
via a Poisson distribution with parameter $\lambda \cdot \mu(S)$, and
then choose $\{X_1,\ldots,X_N\}$ independently and uniformly from $S$.
(It is easy to generalize this setup to
more general measures, but for most applications the measure of interest
 is Lebesgue or 
absolutely continuous with respect to Lebesgue measure.)

The outcome of the above procedure is a Poisson point process with parameter
$\lambda$.  Now fix $R \in (0,\infty)$.  Run
the following procedure.  Draw a Poisson point process $X$.
If any two points are within distance
$R$ of each other, throw away the entire point process and start over
by drawing a new Poisson point process.  Continue drawing point
processes until a configuration is found where every point is at least
distance $R$ from its closest neighbor.

Because the chance of acceptance decreases as the number of pairs within
range increases, a draw from a hard core process will have fewer points
that are farther apart than in an unmodified Poisson point process
with parameter $\lambda$.

\section{Continuous time birth death chains}

While the method described in the previous section will always terminate
with probability 1, if $\lambda$ and $S$ are very large
the probability of rejecting the configuration and starting over will be 
prohibitively close to 1.  This makes the method unusable in practice.

To avoid this problem, Markov chains are often used instead.  
A Markov chain is a stochastic
process where the future distribution of the state depends only upon
the current state, and not upon the past history of the chain.  The classic
example is shuffling a deck of cards, where the chance of doing a 
particular shuffle move does not depend on the past history of the deck.
Under mild conditions, the distribution of the configuration will
approach a stationary distribution.  Again using the example of the cards,
under most shuffling schemes, the cards quickly approach the distribution
that is uniform over the set of permutations of the cards.

For point processes, a particular type of Markov chain introduced by 
Preston~\cite{preston1977} is called a spatial birth-death process.
In this chain, moves either add a point (called
a {\em birth}) or remove a point (called a {\em death}).
To make moves in the chain,
think of a sequence of alarm
clocks.  The space itself has a birth clock where the time until the alarm
goes off is a random variable that has an exponential distribution
with mean 
$1/[\lambda \cdot \mu(S)]$.  When this alarm clock goes
off a point is born and added to the configuration at a uniformly
chosen location in $S$.  

When a point is born, it is given a death alarm clock that is an exponential
random variable with mean 1.  When a death alarm clock ``goes off'', the
point in question is removed from the configuration.

Now in order to create a birth death process whose stationary distribution
is the hard core gas model, 
it is necessary to sometimes reject a birth.  That is,
even though the birth clock has ``gone off'', a point will only be added to
the configuration with a certain probability that depends on the locations of
the proposed birth point and the points in the rest of the configuration.

For the hard core gas model, this works as follows.  Suppose
a point $v$ is proposed to be born to state $x$.  Then accept the birth
only if there are no points within distance $R$ of $v$ in the current
configuration $x$.  Otherwise, reject the birth, and do not add $v$.
When a point $v$ is not born because a point $w$ in $x$ is within
distance $R$ of $v$, say that point $w$ {\em blocks} $v$.

\section{Dominated coupling from the past}
\label{SEC:dcftp}

The natural question with shuffling is:  how many moves are needed before the 
cards are close to being uniformly permuted?  For birth death chains,
the question is similar:  how many births and deaths are needed before
the state is close to the distribution described by the hard core gas model?
Fortunately, it turns out that 
it is not necessary to determine the mixing time of a 
Markov chain in order to draw samples from the stationary distribution!

Dominated coupling from the past (dcftp) was created by Kendall and
M{\o}ller~\cite{kendallm2000} to draw samples exactly from the hard core
gas 
model without having to use the acceptance/rejection procedure.  It works
in conjunction with a continuous time Markov chain where points are ``born''
and added to the configuration, and ``die'' and are removed from the system.

The time necessary to run dcftp is related to the {\em clan of descendants}
(cod) of a point, defined as follows.  The zeroth generation of 
the cod is the point itself.  The first generation consists of those 
proposed points that
are born within distance $R$ of the zeroth generation while that initial
point is still alive.  If the initial point dies before any proposed point is
born within distance $R$, then the entire cod consists solely of the 
initial point.

Suppose that proposed 
points are born within distance R of the initial point
before that initial point dies.  
The remaining generations are defined recursively in a similar fashion.
A point joins the cod at the generation $k$ if a) it is proposed to be 
born within distance
$R$ of a generation $k - 1$ point that is still alive and b) 
$k$ is the minimum value for which a) holds.

Then the cod is the union of these points over all generations.
Roughly speaking, the cod is the set of points whose presence (or lack of
presence) in the configuration can be traced back to the original ancestor
point.  If the cod is small, it means that the influence of a particular 
point is not felt forever in the configuration, but rapidly dissipates.
The running time of dcftp is proportional to the size of the cod.  If there
is a chance that the cod grows indefinitely, dcftp has the same chance of
taking forever to
generate a sample, so the algorithm is only useful when the cod is finite
with probability 1.  These ideas are made precise in~\cite{kendallm2000}.

\section{Bounding the size of the clan of descendants}
\label{SEC:bounding}

In order to bound the size of the clan of descendants, let $C_t$ denote
the points in the cod (of any generation) 
at time $t$.  Initially, $C_0 = \{v\}$, the single
ancestor point.  There are two possibilities when the cod changes: 
either the size of $C_t$
(denoted $\# C_t$) increases by one or it decreases by one.  
If a point $w$ is proposed to be born within
distance $R$ of $v$ then the point
$w$ is added to the clan of descendants, and $\#C_t$
increases by 1.  On the other hand, when $v$ dies,
it is removed from $C_t$ and $\# C_t$ decreases by 1.

We wish to
show that $\# C_t$ converges to 0 (so that $C_t = \emptyset$)
with probability 1 after a finite number of
births and deaths that affect the cod.  In particular,

\begin{theorem}
For $\lambda < [8/(3\sqrt{3} + 4\pi) ] / R^2$, 
the expected number of 
births and deaths that affect the cod is bounded above by
\[
\left[ \frac{8/(3\sqrt 3 + 4\pi)}{R^2} - \lambda \right]^{-1}.
\]
\end{theorem}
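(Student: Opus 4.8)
The plan is to analyze the continuous-time process $\# C_t$ directly as a birth–death process on the nonnegative integers and to bound its expected total number of transitions before absorption at $0$. The key quantities are the total birth rate and total death rate of the clan at any time $t$. Each point currently alive in the clan dies at rate $1$, so the total death rate is exactly $\# C_t$. For births, each living point $u$ in the clan attracts new clan members: a proposed birth joins the clan when it lands within distance $R$ of $u$ while $u$ is alive. Since proposed births arrive as a space–time Poisson process of intensity $\lambda$ per unit area per unit time, the birth rate contributed by the clan is $\lambda$ times the area of the region within distance $R$ of at least one living clan point — that is, $\lambda \cdot \mu\!\left(\bigcup_{u} B(u,R)\right)$, the area of the union of radius-$R$ disks around the living clan points.

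Next I would bound this union area. The crude bound used in the earlier analysis is $\mu(\bigcup_u B(u,R)) \le \pi R^2 \cdot \# C_t$, which gives birth rate $\le \lambda \pi R^2 \cdot \# C_t$; comparing with death rate $\# C_t$ yields the old threshold $\lambda < 1/(\pi R^2)$. The improvement advertised in the abstract comes from a sharper geometric estimate on the union area. The crucial observation is that the clan points cannot be arbitrarily close: because a birth is only accepted into the configuration when nothing blocks it, and more to the point because overlapping disks waste area, one can show the union of $n$ radius-$R$ disks whose centers are constrained by the hard-core geometry has area strictly less than $\pi R^2 n$. The constant $8/(3\sqrt 3 + 4\pi)$ strongly suggests the extremal packing is a triangular/hexagonal arrangement: $3\sqrt 3$ is tied to the area of a regular hexagon or equilateral-triangle cell of the relevant size, and $4\pi$ to the disk areas, so the plan is to prove a bound of the form $\mu(\bigcup_u B(u,R)) \le (3\sqrt 3 + 4\pi)/8 \cdot R^2 \cdot \#C_t$ by an area-per-point (isoperimetric-type) argument, decomposing the union into per-point contributions and bounding each by the worst-case cell area.

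With that geometric bound in hand, the dynamics reduce to a one-dimensional comparison. Writing $a = [8/(3\sqrt 3 + 4\pi)]/R^2$, the birth rate is at most $(\lambda/a)\,a R^2 \cdot (3\sqrt3+4\pi)/8 \cdot \#C_t$; cleanly, the total birth rate is bounded by $(\lambda/a)\,\#C_t$ and the death rate equals $\#C_t$. I would then dominate $\#C_t$ by a linear birth–death chain in which, from state $n$, the next affecting event is a birth with probability at most $(\lambda/a)/(1+\lambda/a) = \lambda/(a+\lambda)$ and a death otherwise. Let $T$ be the expected number of affecting events started from a single point. Conditioning on the first event gives a recursion $T \le 1 + \frac{\lambda}{a+\lambda}\,(2T)$ — one event plus, after a birth, two independent copies of the same sub-clan process — which solves to $T \le (a-\lambda)^{-1}$ precisely when $\lambda < a$, matching the claimed bound. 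Equivalently one uses the standard fact that a random walk / branching process with per-step increase probability $p = \lambda/(a+\lambda)$ has finite expected absorption time iff $p < 1/2$, i.e.\ $\lambda < a$, and computes the expectation exactly.

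The main obstacle will be the geometric lemma, namely establishing the sharp per-point area bound $(3\sqrt3+4\pi)/8 \cdot R^2$ for the union of the radius-$R$ disks centered at clan points and, critically, verifying that the hard-core dynamics actually enforce whatever separation the extremal configuration requires. The subtlety is that clan points are \emph{proposed} births, not accepted configuration points, so they need not themselves be $R$-separated; the argument must therefore bound the union area using only the genuine constraints the process imposes (for instance, that a newly added clan point lies within $R$ of its parent, together with the blocking structure), and show that this still forces enough spreading to beat the naive $\pi R^2$ rate. Making this area accounting rigorous, and identifying the precise configuration that attains the constant, is where the real work lies; the probabilistic comparison afterward is routine.
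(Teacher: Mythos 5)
There is a genuine gap, and it sits exactly where you suspected the ``real work'' lies: the deterministic geometric lemma you need is false. You propose to bound the union area by $\mu\bigl(\bigcup_u B(u,R)\bigr) \leq \frac{3\sqrt{3}+4\pi}{8} R^2 \cdot \#C_t \approx 2.22\,R^2\,\#C_t$, so that the total birth rate is at most $(\lambda/a)\#C_t$ and the problem reduces to a linear birth--death chain. But already for $\#C_t=1$ the union area is $\pi R^2 \approx 3.14\,R^2$, exceeding your bound. More fatally, the \emph{living} clan points at time $t$ need not be near one another at all: a point only has to be within $R$ of its parent \emph{at the moment of its proposal}, and parents die while children survive, so the surviving clan can consist of widely separated points whose disks are disjoint. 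Hence $A(C_t) = \pi R^2\,\#C_t$ is attained and no deterministic per-point constant below $\pi R^2$ exists. The constant $3\sqrt{3}/4$ in the paper is not a packing constant; it is the \emph{expected} area a newborn adds, $\int_0^R \frac{2a}{R^2}\bigl[\pi R^2 - 4\int_{a/2}^R \sqrt{R^2-x^2}\,dx\bigr]\,da = \frac{3\sqrt{3}}{4}R^2$, which is small only because the child's location is uniform over the parent's disk and therefore typically overlaps it heavily.

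The paper's route is correspondingly different: it tracks the potential $\phi(x) = A(x) + c\,\# x$ along the embedded jump chain, pairs the expected-area-added-per-birth bound above with a lower bound $\mean[\text{area removed at a death}] \geq 2A(D_i)/\#D_i - \pi R^2$ (obtained by decomposing $A$ by multiplicity of coverage), chooses $c$ so that the conditional drift of $\phi$ is at most $-\delta$ uniformly, and applies the Optional Sampling Theorem to the supermartingale $\phi(D_{i\wedge\tau}) + (i\wedge\tau)\delta$. Your reduction to a one-dimensional chain in $\#C_t$ alone cannot capture this, because the improvement over $1/(\pi R^2)$ lives entirely in the correlation between the number of points and the area they cover. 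As a secondary point, your branching recursion $T \leq 1 + \frac{\lambda}{a+\lambda}\,2T$ solves to $T \leq \frac{a+\lambda}{a-\lambda}$, not $(a-\lambda)^{-1}$, so even granting the rate bound the final expression would not match the theorem.
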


As noted in the introduction, a similar previous result in~\cite{huber2009f}
had a constant of $1 / \pi \approx .3183$ in front of the 
$R^{-2}$ factor, whereas this
new result has $8/(3 \sqrt{3} + 4\pi) \approx .4503.$  Hence this result
proves the efficacy of the dcftp method (and mixing time of the chain)
over values of $\lambda$ that are 41\% larger than previously known.

Before proving this theorem, we first develop some notation and facts
that will be useful.
As earlier, let $C_t$ denote the set of points in the cod.  We are only 
interested in how $C_t$ changes with births and deaths.  Hence let 
$t_i$ denote the time of the $i$th event that is either 
a death of a point
in the cod, or the proposed birth of a point within distance 
$R$ of the cod.  Let $D_i = C_{t_i}$, so $D_i$ represents the cod after
$i$ such events have occurred.  Let $\# D_i$ denote the number of points
in this set.

For a configuration $x$, let $A(x)$ denote the Lebesgue measure of the
region within distance $R$ of at least one point in $x$.  In particular,
$A(D_i)$ is the measure of the area of the region within distance $R$
of points in the cod.  So $A(D_i)$ 
is proportional to the rate at which births occur
that increase $\# D_i$ by 1.  Our first lemma limits the average 
area that is added when such a birth occurs.

\begin{lemma}
$\mean[A(D_{i+1}) - A(D_i)|\operatorname{a\ birth\ is\ accepted}] 
  < R^2 3\sqrt{3} / 4.$
\end{lemma}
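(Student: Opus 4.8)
The plan is to recast the conditional expectation as a purely geometric average and reduce it to a one-disk computation that produces the constant $3\sqrt3/4$ exactly. Conditioned on a birth that enlarges the cod, the proposed point $w$ lands within distance $R$ of some point of $D_i$; since births in the dominating process form a Poisson process of constant rate, $w$ is uniformly distributed over the covered region $U := \{x : d(x,D_i)\le R\} = \bigcup_{u\in D_i} B(u,R)$, whose measure is exactly $A(D_i)$. The quantity of interest is the freshly covered area $f(w) := \mu(B(w,R)\setminus U) = \pi R^2 - \mu(B(w,R)\cap U)$, and the claim is that $\mean[f(w)] = \mu(U)^{-1}\int_U f(w)\,dw < (3\sqrt3/4)R^2$.

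First I would bound $f$ pointwise by comparing with the single nearest disk. For $w\in U$ let $u$ be a closest point of $D_i$, so $d^\ast := |w-u| \le R$ and $B(u,R)\subseteq U$; hence $\mu(B(w,R)\cap U) \ge \mu(B(w,R)\cap B(u,R)) = L(d^\ast)$, where $L(s) = 2R^2\cos^{-1}(s/2R) - (s/2)\sqrt{4R^2 - s^2}$ is the area of the lens formed by two radius-$R$ disks whose centers are a distance $s$ apart. This gives $\mean[f(w)] \le \pi R^2 - \mean[L(d^\ast)]$. When $D_i$ is a single point the inequality is an equality and $d^\ast = |w-v|$ has density $2s/R^2$ on $[0,R]$, so the bound is exactly
\[
\pi R^2 - \int_0^R L(s)\,\frac{2s}{R^2}\,ds = \pi R^2 - R^2\Big(\pi - \tfrac{3\sqrt3}{4}\Big) = \tfrac{3\sqrt3}{4}R^2,
\]
a routine integral that I would verify by the substitution $s = 2R\sin\phi$. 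This both identifies where the constant comes from and shows the single-ancestor case is the extremal one.

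The substantive step is to show that additional ancestor points can only help, i.e.\ $\mean[L(d^\ast)] \ge R^2(\pi - 3\sqrt3/4) =: J$ for every finite $D_i$. Writing $g(s) := \mu(\{x : d(x,D_i)\le s\})$, the distance $d^\ast$ has density $g'(s)/g(R)$ on $[0,R]$, and integrating $\int_0^R L(s)\,g'(s)\,ds$ by parts twice reduces the desired inequality to the single geometric fact that $g(s)/s^2$ is non-increasing on $(0,R]$, equivalently $g(s)\ge (s/R)^2 g(R)$. I expect this parallel-volume monotonicity to be the main obstacle. My plan for it is the scaling identity $g(s)/s^2 = \mu\big(\bigcup_{u\in D_i} B(u/s,\,1)\big)$: as $s$ increases the rescaled centers $u/s$ contract toward the origin, all pairwise distances shrink, and the measure of a union of congruent disks does not increase under a contraction of its centers. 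In the plane this is the (proven) Kneser--Poulsen phenomenon, and for the homothety arising here it should admit an elementary direct proof, which is what I would supply rather than invoke the general theorem.

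Finally I would record strictness: equality in the pointwise lens bound forces $U = B(u,R)$, which occurs only when $D_i$ is a single point, so whenever $\#D_i \ge 2$ the covered region genuinely overlaps neighbouring disks and the inequality is strict. The borderline single-point case meets the bound with equality, which is harmless for the subsequent drift argument.
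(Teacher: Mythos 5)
Your proposal is correct and is in fact more careful than the paper's own proof. The paper carries out only the single--ancestor computation: it fixes a blocking point $v$, asserts that $|w-v|$ has density $2a/R^2$ on $[0,R]$, evaluates the same integral you do (your lens area $L(s)$ equals the paper's $4\int_{s/2}^R\sqrt{R^2-x^2}\,dx$), and dismisses the multi--point case with the remark that overlap with other disks of $D_i$ only reduces the new area. That remark justifies the pointwise bound $f(w)\le \pi R^2 - L(d^\ast)$ but not the distributional claim: as you correctly observe, $w$ is uniform over the whole covered region $U$, so the nearest--centre distance has density $g'(s)/g(R)$ rather than $2s/R^2$, and one must separately show $\mean[L(d^\ast)]\ge R^2(\pi-3\sqrt3/4)$. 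Your reduction of this to the monotonicity of $g(s)/s^2$ is exactly the missing step, and the fact is true; but you do not need Kneser--Poulsen for it. Decompose $U$ into the Voronoi cells of the points of $D_i$ intersected with the corresponding radius-$R$ disks; each piece is star--shaped about its centre, say $\{(\theta,r):r\le\rho(\theta)\}$ with $\rho\le R$, and the elementary pointwise inequality $\min(s,\rho)^2\ge (s/R)^2\min(R,\rho)^2$ (check the cases $\rho\le s$ and $\rho>s$) gives $g(s)\ge(s/R)^2g(R)$ after integrating in $\theta$ and summing over cells. Equivalently, conditioned on landing in the cell of $v$, the distance $|w-v|$ is stochastically dominated by the density $2s/R^2$; since $\pi R^2-L$ is increasing this yields the bound cell by cell and lets you skip the double integration by parts entirely. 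Your observation that the single--point case attains equality (so the lemma's strict inequality should really be ``$\le$'') is also correct and, as you say, harmless for the drift argument. With the monotonicity step filled in as above, your argument is a complete proof, whereas the paper's is a sketch that silently substitutes the single--point distance distribution for the true one.
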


\begin{proof}
Let $w$ be a proposed birth point.  Then in order to add to the clan of
descendants, $w$ must be within distance $R$ of a point $v$ of $D_i$.
The area of the new setup does not increase by $\pi R^2$, however, since only
the region within $R$ of $w$ and not within $R$ of $v$ can be added area.  
Because $w$ 
is conditioned to lie within distance $R$ of $v$, the distance between
centers is a random variable with density 
$f_r(a) = (2a/R^2) \cdot \ind(0 \leq a \leq R)$.
Therefore, the expected area added can be written as:
\[
\mean[\textrm{new area}] \leq
  \int_0^R \frac{2a}{R^2} 
   \left[\pi R^2 - 4 \int_{a/2}^R \sqrt{R^2 - x^2} \ dx \right] \ da
  =  R^2 3 \sqrt{3}/4.
\]
This is an upper bound on the expected new area because $w$ might be
within distance $R$ of other points in $D_i$ as well, which would reduce
the new area added.
\end{proof}

The last lemma gives an upper bound on the area added when a birth occurs.
The next lemma gives a lower bound on the area removed when a death occurs.

\begin{lemma}
$\mean[A(D_{i+1}) - A(D_i)|\operatorname{a \ death \ occurs}]
 \geq [2 A(D_i) / \#D_i] - \pi R^2 .$
\end{lemma}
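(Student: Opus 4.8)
The plan is to evaluate the conditional expectation exactly and then estimate it through an accounting of how many cod points cover each location. First I would identify which point dies. Each of the $\# D_i$ points of the cod carries an independent rate-one death clock, while births that enlarge the cod arrive at total rate $\lambda A(D_i)$; conditioned on the next cod event being a death rather than a birth, the point that dies is uniform over the $\# D_i$ present points. Removing a point $v$ shrinks the covered region only by the part of the disk $B(v,R)$ of radius $R$ about $v$ that no surviving cod point already covers. Writing $U_v$ for this privately covered area, a death changes the area by $-U_v$, so averaging over the uniform choice of $v$ gives
\[
\mean[A(D_{i+1}) - A(D_i) \mid \textrm{a death occurs}] = -\frac{1}{\# D_i}\sum_{v \in D_i} U_v .
\]

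Next I would turn the sum $\sum_v U_v$ into a global quantity. Let $A_k$ denote the area of the set of locations covered by exactly $k$ of the disks $\{B(v,R) : v \in D_i\}$. A location covered by exactly one disk is charged to its unique coverer and to no other, so $\sum_{v} U_v = A_1$, while $A(D_i) = \sum_{k \geq 1} A_k$. Counting each disk's area $\pi R^2$ with multiplicity yields the key identity
\[
\sum_{k \geq 1} k\, A_k = \# D_i \cdot \pi R^2 .
\]
Since $k \geq 2$ forces $k A_k \geq 2 A_k$, isolating the singly covered term gives $A_1 = \# D_i \cdot \pi R^2 - \sum_{k \geq 2} k\, A_k \leq \# D_i \cdot \pi R^2 - 2\sum_{k \geq 2} A_k = \# D_i \cdot \pi R^2 - 2\bigl(A(D_i) - A_1\bigr)$, which rearranges to
\[
A_1 \geq 2 A(D_i) - \# D_i \cdot \pi R^2 .
\]

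Combining this with the first display, the expected value of the area removed, $A(D_i) - A(D_{i+1})$, on a death equals $A_1/\# D_i$ and is therefore at least $2A(D_i)/\# D_i - \pi R^2$, which is the estimate consumed by the drift computation in the proof of the theorem. The main obstacle is exactly this overlap bookkeeping: a death does not free up a full disk of area $\pi R^2$, only the private portion $U_v$, and because every cod point is born within distance $R$ of an existing one the disks overlap heavily, so the naive $\pi R^2$ estimate is far too crude. Converting the per-point private areas into the global quantities $A(D_i)$ and $\# D_i$ through the multiplicity identity $\sum_k k\, A_k = \# D_i\,\pi R^2$ — which is what produces the corrective $2A(D_i)/\# D_i$ term — is where the real work lies; the inequality $\sum_{k \geq 2} k\, A_k \geq 2\sum_{k \geq 2} A_k$ is the one place the clustering of the cod is used, and sharpening it would be the route to any improved constant.
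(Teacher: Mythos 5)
Your proof is correct and takes essentially the same route as the paper's: the same decomposition into multiplicity classes $A_k$, the same counting identity $\sum_k k\,A_k = \#D_i \cdot \pi R^2$, the resulting bound $A_1 \geq 2A(D_i) - \#D_i\,\pi R^2$, and averaging the private areas over the uniformly chosen dying point. (Your write-up is in fact slightly more careful than the paper's, which drops the factor $\#D_i$ on $\pi R^2$ in one display and reverses an inequality sign in another, both evidently typos.)
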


\begin{proof}
Let $A_k$ denote the area of the region that is within distance $R$ of
exactly $k$ points of $D_i$.  Then
\[
\pi R^2 \# D_i = A_1 + 2 A_2 + 3 A_3 + \cdots + (\# D_i) A_{\#D_i},
\]
and $A(D_i) = A_1 + A_2 + A_3 + \cdots + A_{\#D_i}$.  
Therefore 
\[
2 A(D_i) - \pi R^2 = A_1 - A_3 - 2A_4 - \cdots - (\# D_i - 2)A_{\#D_i} 
\leq A_1.
\]

If the points in $D_i$ are labeled $1,2,\ldots,\#D_i$, then 
$A_1 = a_1 + a_2 + \cdots + a_{\#D_1}$, where $a_k$ is the area of the region
within distance $R$ of point $i$ and no other points.  When a death occurs,
every point in $\#D_i$ is equally likely to be chosen to be removed, so
the average area removed is:
\[
\frac{1}{\#D_i} a_1 + \cdots \frac{1}{\#D_i}a_{\#D_i} =
  \frac{1}{\#D_i} A_1 \leq \frac{2 A(D_i)}{\#D_i} - \pi R^2.
\]
\end{proof}

We are now ready to prove the theorem.

\begin{proof}
For a configuration $x$, let $\phi(x) = A(x) + c \cdot \#x,$ where
$c$ is a constant to be chosen later.  Note that 
$\phi(x)$ is positive unless $x$ is the empty configuration, in which
case it equals 0.  Let $\tau = \inf\{i:D_i = \emptyset\}$.  
Using $a \wedge b$ to denote the minimum of $a$ and $b$, 
we shall show that $\phi(D_{i \wedge \tau}) + (i \wedge \tau) \delta$
is a supermartingale with
$\delta = [2 - \lambda R^2(3\sqrt 3 / 4)] / [1 + \lambda]$.  The
rest of the result 
then follows as a consequence
of the Optional Sampling Theorem (OST).  See Chapter 5 of \cite{durrett2010}
for a description of supermartingales and the OST.

When $i \geq \tau$, $\phi(D_{i \wedge \tau}) + (i \wedge \tau)\delta$ is
identically zero, and so trivially is a supermartingale.

When $i < \tau$, $\phi(D_{i+1})$ either grows when a birth
occurs in the cod, or shrinks when a death occurs.  First consider how
$\#D_i$ changes.  Births occur at rate $\lambda A(D_i)$, and deaths at rate
$\#D_i$.  Hence the probability that an event that changes $\#D_i$ is 
a birth is $A(D_i) / (A(D_i) + \#D_i)$, with the rest of the probability
going towards deaths.  So
\begin{align*}
\mean[\# D_{i+1} - \# D_i| \phi(D_i)] &=
  \mean[ \mean[\# D_{i+1} | D_i] |  \phi(D_i)] \\
 &\leq \mean\left[\ind(i < \tau) \left( \frac{\lambda A(D_i)}{A(D_i) + \#D_i}
       - \frac{\# D_i}{A(D_i) + \#D_i}
    \right) | \phi(D_i) \right].
\end{align*}
(The analysis in~\cite{huber2009f} only considered this term in $\phi$,
which is why the result is weaker than what is given here.)

From our
first lemma, a birth increases (on average) the area covered by the cod
by at most $R^2 3 \sqrt{3} / 4$.  
Our second lemma provides a lower bound on the 
average area removed when a death occurs.  Combining these results yields
\begin{align*}
\mean&[A(D_{i+1}) - A(D_i)|A(D_i)] \\
 &\leq \mean\left[\ind(i < \tau) \left( \frac{\lambda A(D_i)}{A(D_i) + \#D_i}
       R^2 \frac{3\sqrt{3}}{4}
       - \frac{\# D_i}{A(D_i) + \#D_i}\left(\frac{2A(D_i)}{\#D_i} - \pi R^2 
    \right)
    \right) | \phi(D_i) \right] \\
\end{align*}
Note $\ind(i < \tau)$ is measurable with respect to $\phi(D_i)$, and adding
the terms gives:
\[
\mean[\phi(D_{i+1}) - \phi(D_i)| \phi(D_i)]
 \leq \ind(i < \tau) 
  \mean \left[ \frac{A(D_i) (\lambda((R^2 3\sqrt{3}/4) + c) - 2)
    + \#D_i (\pi R^2 - c)}
   {A(D_i) + \# D_i}
  \right].
\]
Now $c$ can be set to
\[
c = \frac{\pi R^2 + 2 - \lambda R^2 (3 \sqrt 3 / 4)}{1 + \lambda},
\]
so that 
\[
\mean[\phi(D_{i+1}) - \phi(D_i)| \phi(D_i)]
 \leq \ind(i < \tau) 
  \mean \left[ \frac{A(D_i) (-\delta)
    + \#D_i (-\delta)}
   {A(D_i) + \# D_i}
  \right] = -\delta \ind(i < \tau)
\]
where $\delta = [2 - \lambda R^2(3 \sqrt{3} / 4)]/ [1 + \lambda]$.

Hence $\phi(D_{i \wedge \tau}) + (i \wedge \tau) \delta$ is a supermartingale.
As noted above, the result then becomes 
a simple consequence of the OST.
\end{proof}

\section{Experimental Results}
\label{SEC:experimental}

This theoretical result increases the known lower bound for the 
value of $\lambda$ where the clan of descendants is finite, but this
is still just a lower bound.  Computer experiments can estimate 
this critical value of $\lambda$ more precisely.

\begin{figure}[ht]
\begin{center}
\includegraphics[height=4in]{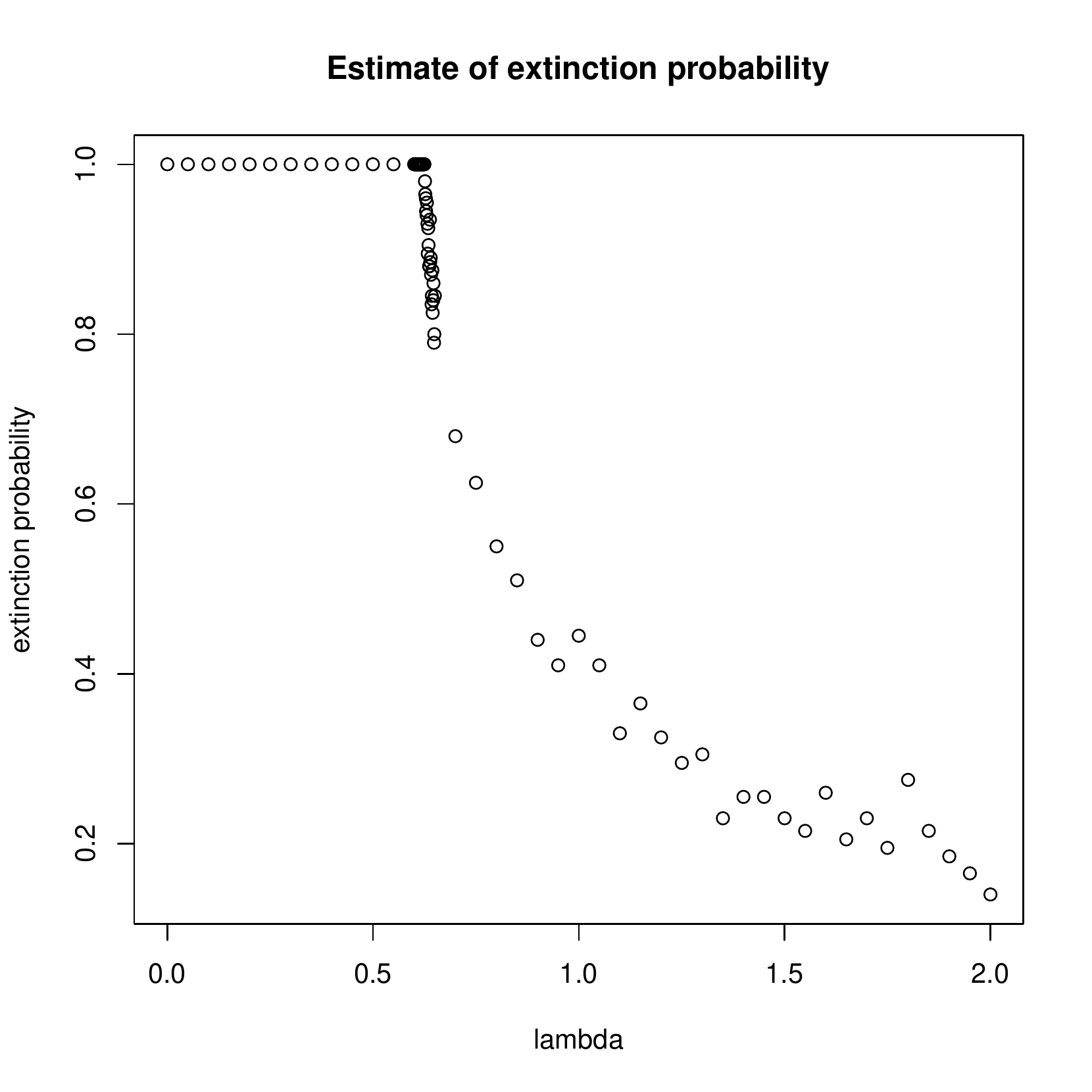}
\end{center}
\caption{Estimates use 200 trials, maximum size of cod 750 points}
\label{FIG:extinction}
\end{figure}

For the estimates in this section, the following protocol was used.  We
began a clan of descendants on the infinite plane from a single point,
and recorded whether the clan died out or reached a size of 750.  This was
repeated 200 times, and used to estimate the probability that the clan 
dies out for a given value of $\lambda$.  The results indicate that 
somewhere in $[0.625,.626]$, the probability begins to drop from 1 down
towards 0 (see Figure~\ref{FIG:extinction} for how the extinction probability
changes with $\lambda$.)  
This indicates that while the new .4503 theoretical result is
an improvement over the old $.3183$ result, there is still work
to be done to reach the true value.  
Increasing the ceiling size from 750 to 1500 did not 
alter the results within experimental error.

\section{Conclusion}
\label{SEC:conclusion}

By including a term for the area covered by the points in the potential
function, a stronger theoretical lower bound on the artificial phase
transition for dominated coupling from the past applied to 
the hard core gas model has been found.  This method appears to be
very general and should apply to a wide variety of repulsive processes.

\bibliographystyle{plain} 
\bibliography{2010_refs}
\end{document}